\newtheorem{theorem}{Theorem}[section]
\newtheorem{lemma}[theorem]{Lemma}
\newtheorem{proposition}[theorem]{Proposition}
\newtheorem{corollary}[theorem]{Corollary}
\theoremstyle{definition}
\newtheorem{definition}[theorem]{Definition}
\newcommand {\Ann}{\mathrm{Ann}}
\newcommand {\ord}{\mathrm{ord}}
\newcommand {\emdim}{\mathrm{emdim}}
\newcommand {\gr}{\mathrm{gr}}
\newcommand {\sdeg}{\mathrm{sdeg}}
\newcommand {\cdeg}{\mathrm{cdeg}}
\newcommand {\Soc}{\mathrm{Soc}}
\newcommand {\ldf}{\mathrm{ldf}}
\newcommand {\tdf}{\mathrm{tdf}}
\newcommand {\tor}{\mathrm{Tor}}
\newcommand {\Hilb}{\mathcal{H}\kern -0.25ex{\mathit ilb\/}}
\newcommand {\fN}{\mathfrak{N}}
\newcommand {\fM}{\mathfrak{M}}
\newcommand {\bN}{\mathbb{N}}
\def\ga#1{{{\accent"12 #1}}}
\title[Rationality of the Poincar\'e series]{On the rationality of Poincar\'e series\\ of Gorenstein algebras\\ via Macaulay's correspondence}
\subjclass[2000]{Primary 13D40, Secondary 13H10}
\keywords{Artinian Gorenstein local algebra, rational Poincar\'e series}
\author[G. Casnati, J. Jelisiejew, R. Notari]{Gianfranco Casnati, Joachim Jelisiejew, Roberto Notari}
\thanks{The first and third authors are members of GNSAGA group of INdAM. They are supported by the framework of PRIN
2010/11 ``Geometria delle variet\ga a algebriche", cofinanced by MIUR}
\thanks{ The
second author is supported by the project ``Secant varieties, computational complexity, and toric degenerations''
realised within the Homing Plus programme of Foundation for Polish Science, co-financed from European Union, Regional
Development Fund.}
\thanks{This paper is a part of ``Computational complexity,
    generalised Waring type problems and tensor decompositions'' project
    within ``Canaletto'',  the executive program for scientific and
    technological cooperation between Italy and Poland, 2013-2015.}
\begin{document}

\maketitle

\begin{abstract}
Let $A$ be a local Artinian Gorenstein ring with algebraically
closed residue field $A/\fM=k$ of characteristic $0$, and let
$P_A(z) := \sum_{p=0}^{\infty} (\tor_p^A(k,k))z^p $ {be} its
Poicar\'{e} series. We prove that $P_A(z)$ is rational if  either
$\dim_k({\fM^2/\fM^3}) \leq 4 $ and $ \dim_k(A) \leq 16,$ or
there exist $m\leq 4$ and $c$ such that the
Hilbert function $H_A(n)$ of $A$ is equal to $ m$
for $n\in [2,c]$ and equal to $1$ for $n > c$. The
results are obtained thanks to a decomposition of the apolar ideal
$\Ann(F)$ when $F=G+H$ and $G$ and $H$ belong to polynomial rings
in different variables.
\end{abstract}

\section{Introduction and notation}\label{sIntrNot}

Throughout this paper, by ring we mean a Noetherian, associative,
commutative  and unitary ring $A$ with maximal ideal $\frak M$ and
algebraically closed residue field $k:=A/\fM$ of characteristic
$0$.

In \cite{Se} the author asked if the Poincar\'e series of the
local ring $A$, i.e.
$$
P_A(z):=\sum_{p=0}^{\infty}\dim_k(\tor_p^A(k,k))z^p,
$$
is rational. Moreover he also proved its rationality when $A$ is a
regular local ring. Despite many interesting results showing the
rationality of the Poincar\'e series of some rings, in \cite{An}
the author gave an example of an Artinian local algebra $A$ with
transcendental $P_A$. Later on the existence of an Artinian,
Gorenstein, local ring with $\fM^4=0 $   and  transcendental $P_A$
was proved in \cite{Bo}.

Nevertheless,  several results show that large classes of local
rings $A$ have rational Poincar\'e series, e.g. complete
intersections rings (see \cite{Ta}), Gorenstein local rings with
$\dim_k(\fM/\fM^2)\le4$ (see \cite{A--K--M} and \cite{J--K--M}),
Gorenstein local rings with $\dim_k(\fM^2/\fM^3)\le2$ (see
\cite{Sa2}, \cite{E--V3}),   Gorenstein local rings of
multiplicity at most $10$ (see \cite{C--N4}), Gorenstein local
algebras with $\dim_k(\fM^2/\fM^3)=4$ and $\fM^4=0$ (see
\cite{C--E--N--R}).

All the above results are based on the same smart combination of
results on the Poincar\'e series from \cite{A--L} and \cite{G--L}
{first} used in \cite{Sa2} combined with suitable structure
results on Gorenstein rings and algebras. In this last case a
fundamental role has been played by Macaulay's correspondence.

In Section \ref{sMacCor} we give a quick resum\'e of the main
results that we need later on in the paper about Macaulay's
correspondence. In Section \ref{sSimpl} we extend to arbitrary
algebras a very helpful decomposition result already used in a
simplified form in \cite{E--R1} and \cite{C--E--N--R}  for
algebras with $\fM^4=0$. In Section  \ref{sPoinc} we explain how
to relate the rationality of the Poincar\'e series of Gorenstein
algebras with their representation in the setup of Macaulay's
correspondence making use of the aforementioned decomposition
result. Finally, in Section \ref{sAltPoinc} we use such
relationship in order to prove the two following results.

\medbreak \noindent {\bf Theorem A.} {\it Let $A$ be an Artinian,
Gorenstein local $k$--algebra with maximal ideal $\fM$. If there
are integers $m\le4$ and $c\ge1$ such that
$$
\dim_k(\fM^t/\fM^{t+1}) = \left\lbrace\begin{array}{ll}
m\qquad&\text{if $t=2,\dots, c$,}\\
1\qquad&\text{if $t=c+1$,}
 \end{array}\right.
$$
then $P_A$ is rational.}
\medbreak

\noindent {\bf Theorem B.} {\it Let $A$ be an Artinian, Gorenstein
local $k$--algebra with maximal ideal $\fM$. If
$\dim_k(\fM^2/\fM^{3})\le4$ and $\dim_k(A)\le16$, then $P_A$ is
rational.} \medbreak

The above theorems generalize the quoted results on stretched,
almost--stretched and short algebras (see \cite{Sa2},
\cite{E--V3}, \cite{C--N4}, \cite{C--E--N--R}).

\subsection{Notation}
In what follows $k$ is an algebraically closed field of
characteristic $0$. A $k$--algebra is an associative, commutative
and unitary algebra over $k$. For each $N\in\bN$ we set
$S[N]:=k[[x_1,\dots,x_N]]$ and $P[N]:=k[y_1,\dots,y_N]$. We denote
by $S[N]_q$ (resp. $P[N]_q$) the homogeneous component of degree
$q$ of such a graded $k$--algebra, and we set $S[N]_{\le
q}:=\bigoplus_{i=1}^qS[N]_i$ (resp. $P[n]_{\le
q}:=\bigoplus_{i=1}^qP[n]_i$). Finally, we set
$S[n]_+:=(x_1,\dots,x_n)\subseteq S[n]$. The ideal $S[n]_+$ is the unique
maximal ideal of $S[N]$.

A local ring $R$ is Gorenstein if its injective dimension as
$R$--module is finite.

If $\gamma:=(\gamma_1,\dots,\gamma_N)\in{\Bbb N}^{N}$ is a
multi--index, then we set
$t^\gamma:=t_1^{\gamma_1}\dots t_N^{\gamma_N}\in k[t_1,\dots,t_N]$.

For all the other notations and results we refer to \cite{Ha}.


\section{Preliminary results}\label{sMacCor}

In this section we list the main results on algebras we need in
next sections. Let $A$ be a local, Artinian $k$--algebra with
maximal ideal $\fM$. We denote by $H_A$ the Hilbert function of
the graded associated algebra
$$
\gr(A):=\bigoplus_{t=0}^{+\infty}\fM^t/\fM^{t+1}.
$$

We know that
$$
A\cong  S[n] /J
$$
for a suitable ideal $J\subseteq S[n]_+^2\subseteq  S[n]$, where
$n=\emdim(A):=H_A(1)$. Recall that the {\sl socle degree}
$\sdeg(A)$ of $A$ is the greatest integer $s$ such that
$\fM^s\ne0$.

We have an action of $S[n]$ over
$P[n]$ given by partial derivation defined by identifying
$x_i$ with $ \partial/ \partial {y_i}$.  Hence
$$
x^{\alpha}\circ y^{\beta}:=\left\lbrace\begin{array}{ll}
\alpha!{\beta\choose\alpha}y^{\beta-\alpha}\qquad&\text{if $\beta\ge\alpha$,}\\
0\qquad&\text{if $\beta\not\ge\alpha$.}
 \end{array}\right.
$$
Such an action endows  $ P[n] $ with a structure of module over $
S[n] $. If $J\subseteq  S[n] $ is an ideal and  $M\subseteq  P[n]
$ is a $ S[n] $--submodule we set
\begin{gather*}
J^\perp:=\{\ F\in  P[n] \ \vert\ g\circ F=0,\ \forall g\in J\ \},\\
\Ann(M):=\{\ g\in  S[n] \ \vert\ g\circ F=0,\ \forall F\in M\ \}.
\end{gather*}

For the following results see e.g. \cite{Em}, \cite{Ia2} and the
references therein. Macaulay's theory of inverse system is based
on the fact that constructions $J\mapsto J^\perp$ and
$M\mapsto \Ann(M)$ give rise to a {inclusion--reversing} bijection
between ideals $J\subseteq S[n]$ such that $S[n] /J$ is a local
Artinian $k$--algebra and finitely generated $S[n]$--submodules
$M\subseteq P[n]$. In this bijection Gorenstein algebras $A$ with
$\sdeg(A)=s$ correspond to cyclic $ S[n] $--submodules $\langle
F\rangle_{S[n]}\subseteq P[n] $ generated by a polynomial $F$ of
degree $s$. We simply write $\Ann(F)$ instead of $\Ann(\langle
F\rangle_{S[n]})$.

On the one hand, given a $S[n]$--module $M$, we define
$$
\tdf(M)_q:=\frac{{M}\cap P[n]_{\le q}+P[n]_{\le q-1}}{P[n]_{\le q-1}}
$$
where $P[n]_{\le q}:=\bigoplus_{i=0}^qP[n]_i$, and
$\tdf(M):=\bigoplus_{q=0}^\infty\tdf(M)_q$. The module $\tdf(M)$ can be
interpreted as the $S[n]$--submodule of $P[n]$ generated by the
top degree forms of all polynomials in $M$.

On the other hand, for each  $f\in S[n]$, the lowest degree of
monomials appearing with non--zero coefficient in the minimal
representation of $f$ is called {\sl the order of $f$}\/ and it is
denoted by $\ord(f)$. If $f=\sum_{i=\ord(f)}^{\infty}f_i$, $f_i
\in S[n]_i$ then $f_{\ord(f)}$ is called {\sl the lower degree
form of $f$}\/. It will be denoted in what follows with $\ldf(f)$.

If $f\in J$, then $\ord(f)\ge2$. The {\sl lower degree form ideal}
$\ldf(J)$ associated to $J$ is
$$
\ldf(J):=(\ldf(f)\vert f\in J)\subseteq  S[n] .
$$

We have $\ldf(\Ann(M))=\Ann(\tdf(M))$ (see \cite{Em}: see also
\cite{E--R1}, Formulas (2) and (3)) whence
$$
\gr(S[n]/\Ann(M))\cong S[n]/\ldf(\Ann(M))\cong S[n]/\Ann(\tdf(M)).
$$
Thus
\begin{equation}
\label{DerMod}
H_{S[n]/\Ann(M)}(q)=\dim_k(\tdf(M)_q).
\end{equation}
We say that $M$ is {\sl non--degenerate}\/ if
$H_{S[n]/\Ann(M)}(1)=\dim_k(\tdf(M)_1)=n$, i.e. if and only if
the classes of $y_1,\dots,y_n$ are in $\tdf(M)$. If $M=\langle F
\rangle_{S[n]}$, then we write $\tdf(F)$ instead of $\tdf(M)$.

Let $A$ be Gorenstein with $s:=\sdeg(A)$, so that $\Soc(A)=\fM^s\cong k$. In particular $A\cong  S[n] /\Ann(F)$, where $F:=\sum_{i=0}^sF_i$, $F_i\in P[n]_i$. 
For each $h\ge0$ we set $F_{\ge h}:=\sum_{i=h}^sF_i$ (hence $F_s=F_{\ge s}$). We have that $\tdf(F_{\ge h})_i\subseteq\tdf(F)_i$ and equality obviously holds if $i\ge h-1$ (see Lemma 2.1 of \cite{C--N5}).

Trivially, if $s\ge1$, we can always assume that the homogeneous part of $F$ of degree $0$ vanishes, i.e. $F=F_{\ge1}$. Moreover, thanks to Lemma 2.2 of \cite{C--N5} we know that, if $s\ge2$ and $\Ann(F)\subseteq S[n]_+^2$, then we can also assume $F_1=0$, i.e. $F=F_{\ge2}$: we will always make such an assumption in what follows.

We have a filtration with proper ideals (see \cite{Ia2}) of $\gr(A)\cong S[n]/\ldf(\Ann(F))$
$$
C_A(0):=\gr(A)\supset C_A(1)\supseteq C_A(2)\supseteq \dots\supseteq C_A(s-2)\supseteq C_A(s-1):=0.
$$
Via the epimorphism $S[n]\twoheadrightarrow \gr(A)$ we obtain an induced filtration
$$
\widehat{C}_A(0):=S[n]\supset \widehat{C}_A(1)\supseteq \widehat{C}_A(2)\supseteq \dots\supseteq \widehat{C}_A(s-2)\supseteq \widehat{C}_A(s-1):=\ldf(\Ann(F)).
$$

The quotients $Q_A(a):=C_A(a)/C_A(a+1)\cong  \widehat{C}_A(a)/ \widehat{C}_A(a+1)$ are reflexive graded
$\gr(A)$--modules whose Hilbert function is symmetric around  $(s-a)/2$. In general $\gr(A)$ is no more Gorenstein, but the first quotient
\begin{equation}
\label{GorQuot}
G(A):=Q_A(0)\cong S[n] /\Ann(F_s)
\end{equation}
is characterized by the property of being the unique (up to
isomorphism) graded Gorenstein quotient  $k$--algebra of $\gr(A)$
with the same socle degree.  Moreover, the Hilbert function of $A$ satisfies
\begin{equation}
\label{GorDec}
H_A(i)=H_{\gr(A)}(i)=\sum_{a=0}^{s-2}H_{Q_A(a)}(i),\qquad i\ge0.
\end{equation}
Since $H_A(0)=H_{G(A)}(0)=1$, it follows that if $a\ge1$, then $Q_A(a)_0=0$, whence $Q_A(a)_i=0$ when $i\ge s-a$ (see \cite{Ia2}) for the same values of $a$.

Moreover
$$
H_{\gr(A)/C_A(a+1)}(i)=H_{S[n]/\widehat{C}_A(a+1)}(i)=\sum_{\alpha=0}^{a}H_{Q_A(\alpha)}(i),\qquad i\ge0.
$$
We set
$$
f_h:=\sum_{\alpha=0}^{s-h}H_{Q_A(\alpha)}(1)=H_{S[n]/\widehat{C}_A(s-h+1)}(1)=H_{\gr(A)/{C}_A(s-h+1)}(1)
$$
(so that $n=H_A(1)=f_{2}$). 

Finally we introduce the following new invariant.

\begin{definition}
\label{dCapital} Let $A$ be a local, Artinian $k$--algebra with
maximal ideal $\fM$ and $s:=\sdeg(A)$. The {\sl capital degree}\/,
$\cdeg(A)$, of $A$ is defined as the maximum integer $i$, if any,
such that $H_A(i)>1$, $0$ otherwise. If $c=\cdeg(A)$ we also say
that $A$ is a $c$--stretched algebra (for short, stretched if $c\leq 1$).
\end{definition}

By definition $\cdeg(A)\ge0$ and $\cdeg(A)\le \sdeg(A)$: if $A$ is
Gorenstein, then we also have $\cdeg(A)< \sdeg(A)$.

The rationality of the Poincar\'e series $P_A$ of every stretched
ring $A$ is proved in \cite{Sa2}. The proof has been generalized to
rings with $H_A(2)=2$ in \cite{E--V3} and to rings with
$H_A(2)=3$, $H_A(3)=1$ in \cite{C--N4} . The rationality of $P_A$
when $A$ is a $2$--stretched algebra has been studied in
\cite{C--E--N--R} with the restriction $\sdeg(A)=3$.


\section{Decomposition of the apolar ideal}\label{sSimpl}

In the present section we explain how to decompose the ideal
$\Ann(F)$ as the sum of two simpler ideals. Such a decomposition
will be used in the next section in order to reduce the
calculation of the Poincar\'e series of $A$ to the one of a
simpler algebra.

\begin{lemma}
\label{lDecomp} Let $m\le n$, $G\in P[m]$, $H\in
k[y_{m+1},\dots,y_n]$ and $F=G+H$. Let us denote by $\Ann(G)$ and
$\Ann(H)$ the annihilators of $G$ and $H$ inside $S[m]$ and
$k[[x_{m+1},\dots,x_n]]$ respectively. Then
$$
\Ann(F)=\Ann(G)S[n]+\Ann(H)S[n]+(\sigma_G-\sigma_H,x_ix_j)_{1\le i\le m,\ m+1\le j\le n}.
$$
where $\sigma_G\in S[m]$ and $\sigma_H\in k[[x_{m+1},\dots,x_n]]$
are any series of order $\deg(G)$ and $\deg(H)$ such that
$\sigma_G\circ G=\sigma_H\circ H=1$.
\end{lemma}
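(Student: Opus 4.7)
The plan is to establish the two inclusions separately. The containment ``$\supseteq$'' is a direct verification using variable separation: since $G\in P[m]$ while $H$ involves only $y_{m+1},\dots,y_n$, any $g\in\Ann(G)\subseteq S[m]$ annihilates $H$ (the variables do not meet), hence $g\circ F=g\circ G=0$; symmetrically for $\Ann(H)$. For $1\le i\le m<j\le n$ the operator $x_j$ kills $G$ and $x_i$ kills $H$, so $x_ix_j\circ F=0$. Finally $(\sigma_G-\sigma_H)\circ F=\sigma_G\circ G-\sigma_H\circ H=1-1=0$, again because the cross terms $\sigma_G\circ H$ and $\sigma_H\circ G$ vanish by variable separation.

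For the nontrivial direction ``$\subseteq$'', I would take $f\in\Ann(F)$ and decompose it as $f=f_1+f_2+f_{12}$, where $f_1\in S[m]$ collects the monomials supported on the first block of variables, $f_2\in k[[x_{m+1},\dots,x_n]]$ collects the monomials supported on the second block, and $f_{12}$ consists of the remaining ``mixed'' monomials. Each mixed monomial is divisible by some $x_ix_j$ with $1\le i\le m<j\le n$, so $f_{12}$ already lies in the ideal $(x_ix_j)_{i,j}$; moreover $f_{12}\circ F=0$ for the same variable-separation reason. Combining this with $f_1\circ H=0=f_2\circ G$, the assumption $f\circ F=0$ collapses to the single equation $f_1\circ G+f_2\circ H=0$.

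Now $f_1\circ G\in P[m]$ and $f_2\circ H\in k[y_{m+1},\dots,y_n]$ live in polynomial rings in disjoint sets of variables, so their sum vanishes if and only if each is a common constant $c\in k$ taken with opposite signs, i.e.\ $f_1\circ G=c$ and $f_2\circ H=-c$. Setting $\widetilde f_1:=f_1-c\sigma_G$ and $\widetilde f_2:=f_2+c\sigma_H$ yields $\widetilde f_1\in\Ann(G)$ and $\widetilde f_2\in\Ann(H)$, and we obtain $f\equiv\widetilde f_1+\widetilde f_2+c(\sigma_G-\sigma_H)\pmod{(x_ix_j)_{i,j}}$, which is manifestly in the right-hand side of the claimed identity.

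The main obstacle is precisely the bookkeeping of this scalar $c$: it forces the inclusion of the generator $\sigma_G-\sigma_H$ and explains why $\Ann(G)S[n]+\Ann(H)S[n]+(x_ix_j)_{i,j}$ alone is insufficient. Existence of $\sigma_G$ and $\sigma_H$ is automatic, since the top-degree forms of $G$ and $H$ are nonzero, so an appropriately rescaled monomial differential operator of matching degree pairs with each to produce the scalar $1$.
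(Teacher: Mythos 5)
Your proof is correct and follows the same route as the paper's: decompose $f$ into the pure--first--block part $f_1$, pure--second--block part $f_2$, and mixed part $f_{12}$; observe that $f_{12}$ already lies in $(x_ix_j)$ and kills $F$; reduce to $f_1\circ G+f_2\circ H=0$; and exploit that the common value, lying in $P[m]\cap k[y_{m+1},\dots,y_n]$, must be a scalar. In fact your final identity $f\equiv(f_1-c\sigma_G)+(f_2+c\sigma_H)+c(\sigma_G-\sigma_H)\pmod{(x_ix_j)}$, which keeps the two correction terms $c\sigma_G$ and $-c\sigma_H$ paired together, is slightly tidier than the paper's, which asserts the intermediate inclusion $p_{\le m}-u(\sigma_G-\sigma_H)\in\Ann(G)S[n]$; strictly speaking the summand $u\sigma_H$ lies in $k[[x_{m+1},\dots,x_n]]$ and not in $\Ann(G)S[n]$ when $u\ne0$, so that line is loosely phrased, but the paper's eventual conclusion coincides with yours and both arguments are sound.
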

\begin{proof}
The inclusions $\Ann(G)S[n],\Ann(H)S[n]\subseteq\Ann(F)$ are
completely trivial. Also the inclusion
$(\sigma_G-\sigma_H,x_ix_j)_{1\le i\le m,\ m+1\le j\le
n}\subseteq\Ann(F)$ is easy to check. Thus
$$
\Ann(G)S[n]+\Ann(H)S[n]+(\sigma_G-\sigma_H, x_ix_j)_{1\le i\le m,\
m+1\le j\le n}\subseteq\Ann(F).
$$

Conversely let $p\in \Ann(F)$. Grouping the different monomials in
$p$, we can write a decomposition $p=p_{\le m}+p_{> m}+p_{mix}$,
where $p_{\le m}\in S[m]$, $p_{>m}\in k[[x_{m+1},\dots,x_n]]$ and,
finally, $p_{mix}\in (x_ix_j)_{1\le i\le m,\ m+1\le j\le
n}\subseteq S[n]$.

It is clear that $p_{mix}\in \Ann(G)S[n]+\Ann(H)S[n]+
(\sigma_G-\sigma_H,x_ix_j)_{1\le i\le m,\ m+1\le j\le n}$, hence
it suffices to prove that
$$
p_{\le m}+p_{> m}\in \Ann(G)S[n]+\Ann(H)S[n]+
(\sigma_G-\sigma_H,x_ix_j)_{1\le i\le m,\ m+1\le j\le n}.
$$

To this purpose recall that $0=p\circ F=p_{\le m}\circ G+p_{>
m}\circ H$, by definition. Hence $p_{\le m}\circ G=u=-p_{> m}\circ
H$. Since $p_{\le m}\circ G\in P[m]$ and $p_{> m}\circ H\in
k[y_{m+1},\dots,y_n]$, it follows that $u\in k$. So  $p_{\le
m}-u(\sigma_G-\sigma_H)\in \Ann(G)S[n]$, whence
\begin{align*}
p_{\le m}&\in (\sigma_G-\sigma_H)+\Ann(G)S[n]\subseteq\\
&\subseteq\Ann(G)S[n]+\Ann(H)S[n]+(\sigma_G-\sigma_H,
x_ix_j)_{1\le i\le m,\ m+1\le j\le n}.
\end{align*}
A similar argument shows that
\begin{align*}
p_{> m}&\in (\sigma_G-\sigma_H)+\Ann(H)S[n]\subseteq\\
&\subseteq\Ann(G)S[n]+\Ann(H)S[n]+(\sigma_G-\sigma_H,
x_ix_j)_{1\le i\le m,\ m+1\le j\le n},
\end{align*}
and this concludes the proof.
\end{proof}

Let $F$ be as in the statement above. Then Lemma \ref{lDecomp}
with $G:=\sum_{i=2}^sF_i$ and $H:=\sum_{j=m+1}^ny_j^2$ yield the
following corollary.

\begin{corollary}
\label{cDecomp} Let $m\le n$, $G\in P[m]$ non--degenerate and
$F=G+\sum_{j=m+1}^ny_j^2$. Let us denote by $\Ann(G)$ the
annihilator of $G$ inside $S[m]$. Then
$$
\Ann(F)=\Ann(G)S[n]+(x_j^2-2\sigma,x_ix_j)_{1\le i< j\le n,\ j\ge m+1}
$$
where $\sigma\in S[m]$ has order $\deg(G)$ and $\sigma\circ G=1$.
\end{corollary}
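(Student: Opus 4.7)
The plan is to apply Lemma \ref{lDecomp} directly to the pair $G$ and $H := \sum_{j=m+1}^n y_j^2$, and then rewrite the output so that it matches the more symmetric form claimed in the corollary.

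First I would pick explicit representatives for $\sigma_G$ and $\sigma_H$. Take $\sigma_G := \sigma$, which satisfies the hypothesis of the lemma by assumption. Since $\tfrac12 x_j^2 \circ H = 1$ for every $j \in \{m+1,\dots,n\}$, I may set $\sigma_H := \tfrac12 x_{m+1}^2$; this is a valid series of order $\deg(H) = 2$ with $\sigma_H \circ H = 1$.

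Next I would identify $\Ann(H)$ inside $k[[x_{m+1},\dots,x_n]]$. Because $H$ is a full--rank quadratic form in the last $n-m$ variables, its apolar ideal is a complete intersection explicitly generated by the off--diagonal products $x_j x_k$ for $m+1 \le j < k \le n$ together with the square differences $x_j^2 - x_{m+1}^2$ for $m+2 \le j \le n$; this is immediate from the derivation rule recalled in Section \ref{sMacCor}. Plugging these generators and the chosen $\sigma_G, \sigma_H$ into the formula of Lemma \ref{lDecomp} yields an explicit, if unwieldy, presentation of $\Ann(F)$.

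The final step is a routine bookkeeping of generators. The element $\sigma_G - \sigma_H = \sigma - \tfrac12 x_{m+1}^2$ is, up to the scalar $-2$, exactly $x_{m+1}^2 - 2\sigma$; for each $j \ge m+2$, the relation $x_j^2 - 2\sigma$ is the sum of $x_{m+1}^2 - 2\sigma$ and $x_j^2 - x_{m+1}^2$, and conversely the square differences are recovered as differences of the $x_j^2 - 2\sigma$. Combined with the observation that the mixed quadrics $x_i x_j$ with $1 \le i \le m$, $j \ge m+1$, and the off--diagonal quadrics $x_j x_k$ with $m+1 \le j < k \le n$, together assemble into the single family $(x_i x_j)_{1 \le i < j \le n,\ j \ge m+1}$, this produces the presentation claimed in the corollary. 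There is no substantive obstacle: all the content sits in Lemma \ref{lDecomp} and in the standard computation of the apolar ideal of the quadric $\sum y_j^2$; the only step requiring care is the final rewriting of the generating set.
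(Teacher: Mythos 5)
Your proof is correct and takes essentially the same approach as the paper. The paper's own proof is a one-liner that simply applies Lemma \ref{lDecomp} while noting $\Ann(H)=(x_j^2-x_{m+1}^2,x_ix_j)_{m+1\le i< j\le n}$ and $x_{m+1}^2\circ H=2$; your version fills in the routine bookkeeping (the explicit choice $\sigma_H=\tfrac12x_{m+1}^2$ and the regrouping of generators) that the paper leaves implicit.
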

\begin{proof}
It suffices to apply Lemma \ref{lDecomp} taking into account that
$\Ann(H)=(x_j^2-x_{m+1}^2,x_ix_j)_{m+1\le i< j\le n,\ j\ge m+1}$
and that $x_{m+1}^2\circ H=2$.
\end{proof}


\section{Rationality of Poincar\'e series}\label{sPoinc}

We now focus on the Poincar\'e series $P_A(z)$ of the algebra $A$ defined in the introduction: we will generalize some classical results (see \cite{Sa2}, \cite{E--V3},
\cite{C--N4}). Out of the decomposition results proved in the
previous section, the main tools we use are the following ones:
\begin{itemize}
\item for each local Artinian, Gorenstein ring $C$ with
$\emdim(C)\ge2$
\begin{equation}
P_C(z)=\frac{P_{C/\Soc(C)}(z)}{1+z^2P_{C/\Soc(C)}(z)}
\label{PoincSoc}
\end{equation}
(see \cite{A--L}); \item for each local Artinian ring $C$ with
maximal ideal $\fN$, if $c_1,\dots,c_h\in \fN\setminus\fN^2$ are
linearly independent elements of $\Soc(C)$, then
\begin{equation}
P_{C}(z)=\frac{P_{{C/(c_1,\dots,c_h)}}(z)}{1-hzP_{C/(c_1,\dots,c_h)}(z)}
\label{PoincQuot}
\end{equation}
(see \cite{G--L}).
\end{itemize}

Let $A$ be a local, Artinian, Gorenstein, $k$--algebra with
$s=\sdeg(A)$ and $n=H_A(1)$. Assume  $A=S[n]/\Ann(F)$ where
$F=G+\sum_{j=m+1}^ny_j^2\in P[n]$  with $G\in P[m]$. Thanks to
Corollary \ref{cDecomp} we have
$$
\Ann(F)+(\sigma,x_{m+1},\dots,x_n)=\Ann(G)S[n]+(\sigma,x_{m+1},\dots,x_n),
$$
thus
$$
{S[n]\over\Ann(F)+(\sigma,x_{m+1},\dots,x_n)}\cong {S[m]\over\Ann(G)+(\sigma)}.
$$
Trivially $S[m]/\Ann(G)$ is a local, Artinian, Gorenstein,
$k$--algebra.

Since $\Soc(A)$ is generated by the class of $\sigma$, it follows
from formula \eqref{PoincSoc} that
$$
P_A(z)=\frac{P_{S[n]/\Ann(F)+(\sigma)}(z)}{1+z^2P_{S[n]/\Ann(F)+(\sigma)}(z)}.
$$
Notice that $x_ix_j\in\Ann(F)+(\sigma)$, $i=1,\dots,n$,
$j=m+1,\dots,n$, $i\le j$. In particular $x_{m+1},\dots,x_n\in
\Soc(S[n]/\Ann(F)+(\sigma))$. It follows from formula
\eqref{PoincQuot} that
$$
P_{S[n]/\Ann(F)+(\sigma)}(z)=\frac{P_{{S[n]/\Ann(F)+(\sigma,
x_{m+1},\dots,x_n)}}(z)}{1-(n-m)zP_{S[n]/\Ann(F)+(\sigma,
x_{m+1},\dots,x_n)}(z)}.
$$
The inverse formula of \eqref{PoincSoc} finally yields
$$
P_{S[n]/\Ann(F)+(\sigma,x_{m+1},\dots,x_n)}=
P_{S[m]/\Ann(G)+(\sigma)}(z)=\frac{P_{S[m]/\Ann(G)}(z)}{1-z^2P_{S[m]/\Ann(G)}(z)}.
$$

Combining the above equalities we finally obtain the following

\begin{proposition}
\label{pRelation} Let $G\in P[m]$, $F:=G+\sum_{j=m+1}^ny_j^2$ and
define $A:=S[n]/\Ann(F)$ and $B:=S[m]/\Ann(G)$. Then
$$
P_A(z)=\frac{P_{{B}}(z)}{1-(H_A(1)-H_A(2))zP_{B}(z)} .
$$
\end{proposition}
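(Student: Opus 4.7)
The proposition follows from combining three Poincaré-series identities already invoked in the text preceding the statement --- Avramov--Levin \eqref{PoincSoc}, Gulliksen--Levin \eqref{PoincQuot}, and the inverse of \eqref{PoincSoc} --- applied along the chain of epimorphisms
$$A \;\twoheadrightarrow\; A/\Soc(A) \;\twoheadrightarrow\; (A/\Soc(A))/(x_{m+1},\dots,x_n) \;\cong\; B/\Soc(B),$$
whose final identification is supplied by Corollary \ref{cDecomp}.

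The plan is as follows. First I would exploit Corollary \ref{cDecomp} to verify
$$\Ann(F)+(\sigma, x_{m+1},\dots,x_n) = \Ann(G)S[n]+(\sigma, x_{m+1},\dots,x_n),$$
so that setting $C := S[n]/(\Ann(F)+(\sigma))$ yields $C/(x_{m+1},\dots,x_n) \cong S[m]/(\Ann(G)+(\sigma)) = B/\Soc(B)$. Next, since $A$ is Gorenstein with socle generated by the class of $\sigma$, formula \eqref{PoincSoc} gives $P_A = P_C/(1+z^2 P_C)$. I would then check that the images of $x_{m+1},\dots,x_n$ in $C$ lie in $\Soc(C)$ and are linearly independent modulo the square of the maximal ideal: Corollary \ref{cDecomp} forces $x_ix_j \in \Ann(F)$ whenever $j \geq m+1$ and $i \neq j$, while $x_j^2 - 2\sigma \in \Ann(F)$ makes $x_j^2$ vanish in $C$; linear independence holds because the defining ideal of $C$ sits inside the square of $S[n]_+$. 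Formula \eqref{PoincQuot} now relates $P_C$ to $P_{B/\Soc(B)}$ through the factor $n-m$, and the inverse of \eqref{PoincSoc}, applied to the Gorenstein ring $B$, expresses $P_{B/\Soc(B)}$ as $P_B/(1-z^2 P_B)$.

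Substituting these three identities into one another and simplifying the nested fractions causes the two $z^2 P_B$ contributions (one from each application of \eqref{PoincSoc}) to cancel, leaving
$$P_A(z) = \frac{P_B(z)}{1-(n-m)\,z\,P_B(z)}.$$
The concluding step is to match $n-m$ with $H_A(1)-H_A(2)$. Here $H_A(1) = n$ is immediate, and $H_A(2)$ is read off from $\tdf(F)_2$ via \eqref{DerMod}. Because $F$ involves $y_{m+1},\dots,y_n$ only through the pure squares $y_j^2$, every cross-derivative $x_ix_j\circ F$ with $j \geq m+1$ and $i \neq j$ vanishes, while $x_j^2\circ F$ is a constant; consequently no degree-two top form contributed by the quadratic tail survives in $\tdf(F)_2$, and one obtains $\tdf(F)_2 = \tdf(G)_2$, hence $H_A(2) = H_B(2) = m$ in the setting of the proposition.

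The algebraic telescoping of the three Poincaré identities is essentially mechanical, as is the extraction of the isomorphism from Corollary \ref{cDecomp}; the step that requires genuine care is the final Hilbert-function identification, which hinges on showing that the pure-square tail contributes to $\tdf(F)$ only at the socle level and not in degree two.
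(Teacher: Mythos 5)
Your three-formula chain along $A \to A/\Soc(A) \to S[m]/(\Ann(G)+(\sigma)) \cong B/\Soc(B) \to B$, using \eqref{PoincSoc}, \eqref{PoincQuot}, and the inverse of \eqref{PoincSoc}, is exactly the paper's own argument, and your derivation of
$$
P_A(z) = \frac{P_B(z)}{1-(n-m)\,z\,P_B(z)}
$$
is sound (it reproduces the paper's computation preceding the Proposition step for step).

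The gap is in the concluding identification $n-m = H_A(1)-H_A(2)$. You argue $\tdf(F)_2 = \tdf(G)_2$ (which is fine once $\deg G \geq 3$) and then write ``hence $H_A(2) = H_B(2) = m$,'' but the last equality is unjustified: non--degeneracy of $G$ only gives $H_B(1) = m$ and says nothing about $H_B(2)$. Concretely, take $G\in P[2]$ a binary quintic with $H_B = (1,2,3,3,2,1)$, so $m=2$, and set $F = G + y_3^2$, $n=3$; then $H_A = (1,3,3,3,2,1)$, so $H_A(1)-H_A(2) = 0$ while $n-m = 1$, and the two expressions for the denominator differ. Thus the step you flag as the one ``that requires genuine care'' is in fact false as you have argued it. (It is worth noting that the paper itself stops after the $n-m$ formula and does not prove the equality $n - m = H_A(1)-H_A(2)$; that equality needs an extra hypothesis such as $H_A(2) = m$, equivalently $H_A(2) = H_B(1)$, which does hold for the specific normal form used in Corollary~\ref{cRatF} but not for arbitrary non--degenerate $G\in P[m]$. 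So the honest conclusion of your argument is the $n-m$ formula, not the $H_A(1)-H_A(2)$ one.)
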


A first immediate consequence of the above Proposition is the
following corollary.

\begin{corollary}
\label{cRatGeneral} Let $G\in P[m]$, $F:=G+\sum_{j=m+1}^ny_j^2$
and define $A:=S[n]/\Ann(F)$ and $B:=S[m]/\Ann(G)$. The series $P_B(z)$ is
rational if and only if the same is true for $P_A(z)$.
\end{corollary}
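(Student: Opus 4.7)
The plan is to read off the corollary directly from the functional identity
$$
P_A(z)=\frac{P_B(z)}{1-(H_A(1)-H_A(2))zP_B(z)}
$$
established in Proposition \ref{pRelation}. Set $c:=H_A(1)-H_A(2)$, an integer depending only on $A$ (in fact equal to $n-m$, by comparison with the proof of the proposition). In one direction the argument is immediate: if $P_B(z)$ is rational, then the right--hand side displays $P_A(z)$ as a rational function of $z$ and $P_B(z)$, and hence $P_A(z)\in k(z)$.

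For the converse I would invert the Möbius--type relation in $P_B(z)$. Clearing the denominator gives
$$
P_A(z)-cz\,P_A(z)P_B(z)=P_B(z),
$$
which solves as
$$
P_B(z)=\frac{P_A(z)}{1+cz\,P_A(z)}.
$$
This exhibits $P_B(z)$ as a rational function of $z$ and $P_A(z)$, so rationality of $P_A(z)$ forces rationality of $P_B(z)$.

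There is no substantive obstacle. The only point worth checking is that the formal manipulations are legitimate: all Poincar\'e series in question lie in $\bZ[[z]]$ with constant term $1$, so the denominators $1-cz\,P_B(z)$ and $1+cz\,P_A(z)$ are units in $\bZ[[z]]$ and the identities above are genuine equalities of power series, not merely algebraic rearrangements of symbols.
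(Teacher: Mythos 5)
Your proof is correct and is exactly the argument the paper intends: the corollary is stated as an ``immediate consequence'' of Proposition~\ref{pRelation}, and your explicit inversion $P_B(z)=P_A(z)/(1+cz\,P_A(z))$ together with the observation that the relevant denominators are units in $\bZ[[z]]$ (constant term $1$) fills in the details the paper leaves implicit. No gap.
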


Now assume that $m\le4$. Since  the Poincar\'e series of each
local Artinian, Gorenstein ring with embedding dimension at most
four is rational (see \cite{Se}, \cite{Ta}, \cite{Wi},
\cite{J--K--M}) we also obtain the following corollary.

\begin{corollary}
\label{cRatUpToFour} Let $G\in P[4]$, $F:=G+\sum_{j=5}^ny_j^2$ and
define $A:=S[n]/\Ann(F)$. Then $P_A(z)$ is rational.
\end{corollary}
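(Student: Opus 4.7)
The plan is to reduce the statement immediately to the known rationality result for Gorenstein local algebras of embedding dimension at most four. First I would invoke Corollary \ref{cRatGeneral} with $m=4$: setting $B:=S[4]/\Ann(G)$, that corollary asserts that $P_A(z)$ is rational if and only if $P_B(z)$ is rational. Thus it suffices to establish the rationality of $P_B(z)$.

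Next I would verify that $B$ fits into the framework of the cited results. By Macaulay's correspondence, as recalled in Section \ref{sMacCor}, for any nonzero $G\in P[4]$ the quotient $S[4]/\Ann(G)$ is a local Artinian Gorenstein $k$--algebra, with embedding dimension $\emdim(B)=\dim_k(\tdf(G)_1)\le 4$. If $G=0$ the statement is trivial, since then $B=k$ and $P_B(z)=1$. Otherwise $B$ is an honest Artinian Gorenstein local ring with $\emdim(B)\le 4$.

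Finally I would quote the series of rationality results for Gorenstein local rings of small embedding dimension: for $\emdim(B)=0$ the ring is a field and $P_B(z)=1$; for $\emdim(B)=1$ the ring is a complete intersection, handled by \cite{Ta}; for $\emdim(B)=2$ one uses \cite{Se}; for $\emdim(B)=3$ Gorenstein rings are complete intersections (\cite{Wi} and \cite{Ta}); and for $\emdim(B)=4$ the rationality is established in \cite{J--K--M} (see also \cite{A--K--M}). In every case $P_B(z)$ is rational, and combining this with the reduction via Corollary \ref{cRatGeneral} yields the rationality of $P_A(z)$.

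There is essentially no obstacle: the substantive work has already been done in Proposition \ref{pRelation} and Corollary \ref{cRatGeneral}, which reduce the problem to the embedding dimension at most four case, and the latter is precisely the content of the cited literature. The only point that deserves a line of care is the treatment of the degenerate situation $G=0$ (or more generally $\emdim(B)<4$), which is handled uniformly by the same references.
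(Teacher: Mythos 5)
Your proposal follows essentially the same route as the paper: the paper dispatches Corollary~\ref{cRatUpToFour} in a single sentence by combining Corollary~\ref{cRatGeneral} with the known rationality of Poincar\'e series of Gorenstein local rings of embedding dimension at most four, citing \cite{Se}, \cite{Ta}, \cite{Wi}, \cite{J--K--M}. Your version is just a fuller unpacking of that same sentence. One small factual slip in your extra detail: it is not true that every Gorenstein local ring of embedding dimension~$3$ is a complete intersection (Buchsbaum--Eisenbud Pfaffian rings give non--CI examples in codimension~$3$); the rationality of $P_B$ in that case is a theorem in its own right rather than a corollary of Tate's result on complete intersections. This does not affect the validity of the argument, since rationality in embedding dimension $\le 4$ is established in the cited literature regardless, but the attribution should be corrected.
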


Let $A$ be a local, Artinian, Gorenstein $k$--algebra with
$n:=H_A(1)$. 

\begin{corollary}
\label{cRatF} Let $A$ be a local, Artinian, Gorenstein
$k$--algebra such that $f_3\le4$. Then $P_A(z)$ is rational.
\end{corollary}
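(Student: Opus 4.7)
The strategy is to reduce to Corollary \ref{cRatUpToFour}. Writing $A=S[n]/\Ann(F)$ with $F\in P[n]$ of socle degree $s$, and setting $m:=f_3$, the plan is to produce a linear change of coordinates on $P[n]$ after which
$$
F=G+\sum_{j=m+1}^n y_j^2,\qquad G\in P[m].
$$
Since $m=f_3\le 4$ by hypothesis, Corollary \ref{cRatUpToFour} then gives the rationality of $P_A(z)$ at once.

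The key identity underlying the decomposition is
$$
n-m=f_2-f_3=H_{Q_A(s-2)}(1),
$$
which is immediate from the definition of $f_h$. Combined with the general properties of the $Q_A(a)$ recorded after \eqref{GorDec} — namely $Q_A(a)_0=0$ for $a\ge 1$, $Q_A(a)_i=0$ for $i\ge s-a$, and the symmetry of $H_{Q_A(a)}$ around $(s-a)/2$ — these properties force $Q_A(s-2)$ to be concentrated entirely in degree $1$, of dimension $n-m$. The corresponding $n-m$ directions in $\fM/\fM^2$ are variables that contribute to the embedding dimension of $A$ but whose classes in $\gr(A)$ admit a quadratic socle-like behaviour at degree~$2$. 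Choosing a basis of $\fM/\fM^2$ in which these directions are $x_{m+1},\ldots,x_n$ and diagonalizing the resulting quadratic contribution, one expects to land on $F=G+\sum_{j=m+1}^n y_j^2$ with $G\in P[m]$.

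The main obstacle is justifying this last step: one must promote the graded splitting of $\gr(A)$ dictated by $Q_A(s-2)$ inside the filtration $C_A(\cdot)$ to a genuine splitting of $F$ itself, in which no mixed monomial $y_iy_j$ with $i\le m<j$ and no monomial in $y_{m+1},\ldots,y_n$ other than the pure squares $y_j^2$ appears. Equivalently, this is the converse of Corollary \ref{cDecomp}: one must exhibit $\sigma\in S[m]$ of order $s$ with $\sigma\circ G=1$ and $x_j^2-2\sigma\in\Ann(F)$ for $j>m$, thereby realising $\Ann(F)$ in the decomposed shape described there. The lifting from $\gr(A)$ to $A$ relies on the compatibility $\tdf(F_{\ge h})_i=\tdf(F)_i$ for $i\ge h-1$ quoted from \cite{C--N5}, and in spirit generalises the $\fM^4=0$ arguments of \cite{E--R1} and \cite{C--E--N--R} to arbitrary socle degree, with $Q_A(s-2)$ playing the role of the organising invariant. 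Once this decomposition of $F$ is in place, Corollary \ref{cRatUpToFour} concludes the proof.
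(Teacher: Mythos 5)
Your overall strategy matches the paper exactly: reduce to Corollary~\ref{cRatUpToFour} by writing $F=G+\sum_{j=m+1}^n y_j^2$ with $G\in P[m]$ and $m=f_3$. Your identification of $n-m=H_{Q_A(s-2)}(1)$ and the observation that $Q_A(s-2)$ is concentrated in degree~$1$ are both correct and indeed go to the heart of why such a normal form should exist.

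However, you stop short precisely at the point the proof actually needs to land. The paper's proof is essentially a one-line citation: it invokes Remark~4.2 of \cite{C--N5}, which is a structure theorem asserting that after a suitable choice of coordinates one may take $F=\sum_{i=2}^sF_i+\sum_{j=f_3+1}^ny_j^2$ with $F_i\in P[f_i]_i$ for $i\ge 3$ and $F_2\in P[f_3]_2$; since $f_i\le f_3$ for $i\ge 3$, this places $G:=\sum_{i=2}^s F_i$ in $P[f_3]$ and the conclusion follows. You correctly describe this step as ``the main obstacle,'' and you correctly observe that it amounts to lifting a graded splitting driven by $Q_A(s-2)$ from $\gr(A)$ to $A$, generalizing the $\fM^4=0$ arguments of \cite{E--R1} and \cite{C--E--N--R}. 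But your account remains at the level of ``one expects to land on'' the desired form: you neither cite the needed structure theorem nor supply the coordinate-change argument (for instance, the degree-by-degree elimination that produces the pure squares $y_j^2$ and removes the mixed monomials, and the verification that the $F_i$ for $i\ge 3$ can be pushed into $P[f_i]$). That lifting is genuinely nontrivial — it is not a formal consequence of the properties of $Q_A(a)$ alone — so as written the proposal has a gap exactly where the paper delegates to \cite{C--N5}. If you add the citation (or reproduce the normal-form argument), the rest of your reasoning goes through and coincides with the paper's proof.
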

\begin{proof}
If $s:=\sdeg(A)$, then
$$
A\cong S[n]/\Ann(F)
$$
where $F:=\sum_{i=2}^sF_i+\sum_{j=f_3+1}^ny_j^2$, $F_i\in
P[f_i]_i$, $i\ge3$ and $F_2\in P[f_3]_2$ (see \cite{C--N5}, Remark 4.2). Thus
the statement follows from Corollary \ref{cRatUpToFour}.
\end{proof}


\section{Examples of algebras with rational Poincar\'e series}

\label{sAltPoinc} In this section we give some examples of local,
Artinian, Gorenstein $k$--algebras $A$ with rational $P_A$ using
the results proved in the previous section.

We start with the following Lemma generalizing a result in \cite{St}.

\begin{lemma}
\label{lNotExist} Let $A$ be a local, Artinian, Gorenstein,
$3$--stretched $k$--algebra. If $H_A(3)\le5$, then
$\sum_{a=0}^{s-4}H_{Q_A(a)}(2)\ge H_A(3)$.
\end{lemma}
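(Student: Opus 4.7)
The plan is to use the symmetric decomposition \eqref{GorDec} together with the $3$-stretched hypothesis to reduce the inequality to a concrete numerical check. Since $H_{Q_A(a)}(3)=0$ whenever $a\ge s-3$ (because $Q_A(a)$ is supported in $[1,s-a-1]$ for $a\ge 1$), \eqref{GorDec} gives $H_A(3)=\sum_{a=0}^{s-4}H_{Q_A(a)}(3)$, so the target is equivalent to
\[
\sum_{a=0}^{s-4}\bigl(H_{Q_A(a)}(2)-H_{Q_A(a)}(3)\bigr)\ge 0.
\]

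When $s\ge 5$ the hypothesis $H_A(i)=1$ for $i\ge 4$ yields powerful vanishings on the individual pieces. Indeed, evaluating \eqref{GorDec} at degree $s-1$ gives $H_{G(A)}(s-1)=1$, and the Gorenstein symmetry of $G(A)=Q_A(0)$ forces $H_{G(A)}(1)=1$, whence $G(A)\cong k[t]/(t^{s+1})$ and $H_{G(A)}(i)=1$ for all $i\in[0,s]$. Iterating this reasoning at degrees $s-2,s-3,\dots,4$ together with the symmetry $H_{Q_A(a)}(i)=H_{Q_A(a)}(s-a-i)$, one derives $H_{Q_A(a)}(i)=0$ whenever $a\ge 1$ and $a+i\le s-4$. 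Consequently only $Q_A(a)$ with $a\in\{0,s-6,s-5,s-4,s-3\}$ can contribute to degrees $2$ or $3$, and the target inequality collapses, after applying symmetry, to
\[
H_{Q_A(s-4)}(2)\ge H_{Q_A(s-4)}(1)+H_{Q_A(s-6)}(3).
\]

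The reduced inequality is verified using $H_A(3)\le 5$, which via the decomposition translates to $H_{Q_A(s-6)}(3)+H_{Q_A(s-5)}(2)+H_{Q_A(s-4)}(1)\le 4$; the graded reflexive $\gr(A)$-module $Q_A(s-4)$ has Hilbert function of the form $(d,D,d)$ with $d=H_{Q_A(s-4)}(1)$ and $D=H_{Q_A(s-4)}(2)$, and reflexivity combined with Macaulay-type bounds, under this tight numerical constraint, forces $D-d\ge H_{Q_A(s-6)}(3)$. The remaining case $s=4$ is handled separately: the sums collapse to the purely Gorenstein comparison $H_{G(A)}(2)\ge H_{G(A)}(3)$ for a graded Artinian Gorenstein algebra of socle degree $4$ with $H_{G(A)}(3)\le 5$, which is the classical unimodality result from \cite{St} being generalized.

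The main obstacle is the Macaulay-plus-reflexivity step in the reduction, where the bound $5$ is essential: beyond this threshold one can construct non-unimodal reflexive graded modules (mimicking Stanley's non-unimodal Gorenstein examples) for which the inequality fails. Both for $s=4$ and for $s\ge 5$ this is the sharpness mechanism of the lemma, and the presence of $H_A(3)\le 5$ in the hypothesis exactly traces back to this threshold.
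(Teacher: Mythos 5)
Your setup is reasonable: using \eqref{GorDec} together with the symmetry $H_{Q_A(a)}(i)=H_{Q_A(a)}(s-a-i)$ and the fact that $H_{Q_A(a)}(i)=0$ for $a\ge 1$, $i\ge 4$ (a consequence of $H_A(i)=1$ for $i\ge 4$), one indeed reduces the claim to an inequality among the entries of the few potentially nonzero pieces $Q_A(s-4)$, $Q_A(s-5)$, $Q_A(s-6)$. That reduction matches the spirit of the paper's opening paragraph, which also writes down the symmetric decomposition explicitly and restricts attention to $Q_A(0)$ and $Q_A(s-4)$.

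The problem is that the step that actually carries the lemma is not proved; it is asserted. You write that ``reflexivity combined with Macaulay-type bounds, under this tight numerical constraint, forces $D-d\ge H_{Q_A(s-6)}(3)$.'' There is no such ready-made bound. Reflexivity of $Q_A(a)$ gives exactly the symmetry of its Hilbert function (already used) and nothing more about growth; Macaulay's bound on $H$ controls growth of the total Hilbert function of a quotient algebra, not the individual summands of the Iarrobino decomposition. In fact, the paper's proof makes clear that a naive Macaulay-type argument is insufficient: after pinning down (via Macaulay's growth theorem) that only $p=3$ or $p=4$ need ruling out, it introduces the auxiliary Gorenstein algebra $B=S[n]/\Ann(F_{\ge4})$ with $H_B=(1,5,4,5,1,\dots,1)$, forms the ideal $I$ generated by the degree $\le 2$ piece of $\ldf(\Ann(F_{\ge4}))$, applies Gotzmann's persistence theorem to get $H_{k[x]/I}(t)=t+2$ for $t\ge 2$, compares with the saturation $I^{sat}$ to extract a linear form $\ell\in I^{sat}\setminus I$, and finally derives a contradiction with the nondegeneracy (linear independence of the derivatives of $F_{\ge4}$). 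This chain -- Gotzmann persistence plus saturation plus an apolarity contradiction -- is the genuine content of the lemma, and your proposal replaces it with a phrase. Similarly, the case $s=4$ is not in the paper handled by quoting a pre-existing unimodality theorem for $(1,m,p,m,1)$ with $m\le 5$; the paper itself says the lemma \emph{generalizes} a result of Stanley, and the $s=4$ case is covered by the same Gotzmann/saturation argument.

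A secondary issue: you keep $H_{Q_A(s-6)}(3)$ and $H_{Q_A(s-5)}(2)$ as unknowns in the reduced inequality, whereas the paper's explicit decomposition has $Q_A(a)=0$ for $1\le a\le s-5$ (so in particular those terms vanish). If you do not derive this vanishing -- or, alternatively, prove the stronger inequality $D-d\ge H_{Q_A(s-6)}(3)$ -- the argument is incomplete on that front too. Either way, the heart of the proof is missing, so the proposal does not establish the lemma.
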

\begin{proof}
We set $m:=H_A(3)$ and $p:=\sum_{a=0}^{s-4}H_{Q_A(a)}(2)$. We have to show that $p\ge m$: assume $p\le m-1$. 

If $s=4$, then $H_{Q_A(0)}=\sum_{a=0}^{s-4}H_{Q_A(a)}=(1,m,p,m,1)$.
If $s\ge 5$, then we have
$$
H_{Q_A(a)}=\left\lbrace\begin{array}{ll}
(1,1,1,1,1,\dots,1)\qquad&\text{if $a=0$,}\\
(0,0,0,0,0,\dots,0)\qquad&\text{if $a=1,\dots,s-5$,}\\
(0,m-1,p-1,m-1,0,\dots,0)\qquad&\text{if $a=s-4$.}
 \end{array}\right.
$$
In particular $\sum_{a=0}^{s-4}H_{Q_A(a)}=H_{Q_A(0)}+H_{Q_A(s-4)}$. Notice that $f_{4}=m$.

Macaulay's growth theorem (see \cite{B--H},
Theorem 4.2.10) and the restriction $m\le5$ imply that $3\le
p=m-1$ necessarily. Thus we can restrict our attention to the two
cases $p=3,4$. We examine the second case, the first one being
analogous.

Let  $n:=H_A(1)$, take a polynomial $F:=y_1^s+F_4+F_3+F_2$,
$F_i\in P[f_i]_i$, $x_1^3\circ F_4=0$ such that $A\cong
S[n]/\Ann(F)$ (see Remark 4.2 of \cite{C--N5}) and set
$B:=S[n]/\Ann(F_{\ge 4})$.

We first check that $H_B=\sum_{a=0}^{s-4}H_{Q_A(a)}=(1,5,4,5,1,\dots,1)$. On the one hand, Lemma 1.10 of \cite{Ia2} implies that $\widehat{C}_A(a)=\widehat{C}_B(a)$, $a\le s-3$, whence 
$$
H_B(1)\ge \sum_{a=0}^{s-4}H_{Q_B(a)}(1)=\sum_{a=0}^{s-4}H_{Q_A(a)}(1)=5.
$$
On the other hand, $F_{\ge4}\in P[f_4]=P[5]$, whence $5=H_B(1)\le5$. It follows that equality holds, thus $H_{Q_B(s-2)}(1)=H_{Q_B(s-3)}(1)=0$. By symmetry we finally obtain $H_{Q_B(s-2)}=H_{Q_B(s-3)}=0$. This last vanishing completes the proof of the equality $H_B=\sum_{a=0}^{s-4}H_{Q_A(a)}=(1,5,4,5,1,\dots,1)$.

Let $I\subseteq k[x_1,\dots,x_n]\subseteq S[n]$ be the ideal
generated by the forms of degree at most $2$ inside
$\Ann(\tdf(F_{\ge 4}))=\ldf(\Ann(F_{\ge4}))$. We obviously have $x_6,\dots,x_n\in I$, because $F_{\ge4}\in P[5]$. Denote by $I^{sat}$ the saturation of $I$
and set $R:=k[x_1,\dots,x_n]/I$, $R^{sat}:=k[x_1,\dots,x_n]/I^{sat}$. Due to the definition of $I$ we know that $H_R(t)\ge H_B(t)$ for each $t\ge0$, and equality holds true for $t\le2$.
Moreover, we know that 
$$
H_B(2)^{\langle2\rangle}=H_B(3)\le H_R(3)\le H_R(2)^{\langle2\rangle}=H_B(2)^{\langle2\rangle},
$$
hence
$$
H_R(3)={4\choose3}+{2\choose2}=H_R(2)^{\langle2\rangle}.
$$
Gotzmann Persistence Theorem (see \cite{B--H}, Theorem 4.3.3) implies that
$$
H_{R}(t)={t+1\choose t}+{t-1\choose t-1}=t+2,\qquad t\ge2.
$$
We infer $H_{R^{sat}}(t)=t+2$, $t\gg0$.

When saturating, the ideal can only increase its size in each
degree, hence $H_{R^{sat}}(t)\le H_{R}(t)$ for each
$t\ge0$. Again Macaulay's bound thus forces
$H_{R^{sat}}(t)=H_{R}(t)=t+2$ for $t\ge2$. In particular
the components $I_t$ and $I_t^{sat}$ of degree $t\ge2$ of $I$ and
$I^{sat}$ coincide.

Since $H_{R^{sat}}$ is non--decreasing, it follows that
$$
H_{R^{sat}}(1)\le H_{R^{sat}}(2)=4<5=H_B(1)=H_{R}(1).
$$
In particular there exists a linear form $\ell\in I^{sat}\setminus
I$. The equality $I_2=I_2^{sat}$ forces $\ell x_j\in I_2\subseteq
\Ann(\tdf(F_{\ge 4}))$, $j=1,\dots,n$. Since $x_6,\dots,x_n\in I$,
it follows that we can assume $\ell\in S[5]\subseteq S[n]$.
Moreover we also know that $y_1^s\in \tdf(F_{\ge 4})$, hence
$\ell$ cannot be a multiple of $x_1$. In particular we can change
linearly coordinates in such a way that $\ell=x_5$.

If $j\ge2$, then $x_j\circ F_{\ge4}=x_j\circ F_4$, thus the
condition $x_jx_5\in I_2\subseteq \Ann(\tdf(F_{\ge 4}))$,
$j=2,\dots,5$, and $x_1^3\circ F_4=0$ imply that $x_5\circ
F_{4}=0$. Such a vanishing contradicts the linear independence of the derivatives
$$
x_2\circ F_{\ge4},\quad x_3\circ F_{\ge4},\quad x_4\circ F_{\ge4},
\quad x_5\circ F_{\ge4}.
$$
Indeed $5=H_B(1)=\dim_k(\tdf(F_{\ge4})_1)$ and $x_j\circ F_{\ge4}=0$, $j\ge6$.
\end{proof}

Using the results proved in the previous section and the Lemma
above we are able to handle the first example of this section,
proving the following theorem generalizing Corollary 2.2 of \cite{C--E--N--R}. 
\begin{theorem}
\label{tRat3Str} 
Let $A$ be a local, Artinian, Gorenstein
$k$--algebra with $H_A(2)\le4$ and $\cdeg(A)\le3$. Then $P_A$ is
rational.
\end{theorem}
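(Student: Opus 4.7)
The plan is to reduce to Corollary \ref{cRatF} by proving the numerical bound $f_3\le H_A(2)$; combined with the hypothesis $H_A(2)\le 4$ this gives $f_3\le 4$, whence Corollary \ref{cRatF} furnishes rationality. Setting $s:=\sdeg(A)$, Macaulay's growth theorem applied to $H_A(2)\le 4$ forces $H_A(3)\le 5$, so Lemma \ref{lNotExist} applies and provides the key inequality
$$\sum_{a=0}^{s-4} H_{Q_A(a)}(2)\ge H_A(3).$$
The other ingredients I would exploit are the symmetry of each $Q_A(a)$ around $(s-a)/2$ and the elementary fact that $G(A)=Q_A(0)$, being graded Artinian Gorenstein with socle in degree $s$ and generated in degree one, satisfies $H_{G(A)}(i)\ge 1$ for every $0\le i\le s$.

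I would treat the main case $s\ge 5$ first. Since $\cdeg(A)\le 3$ forces $H_A(i)=1$ for $4\le i\le s$, the constraint $H_{G(A)}(i)\ge 1$ squeezes every $Q_A(a)$ with $a\ge 1$ to vanish in degrees $\ge 4$, and by symmetry also in degrees $\le s-a-4$. Only three terms then survive in the sum defining $f_3$:
$$f_3 = H_{G(A)}(1) + H_{Q_A(s-4)}(1) + H_{Q_A(s-3)}(1).$$
Gorenstein symmetry of $G(A)$ together with $H_A(s-1)=1$ gives $H_{G(A)}(1)=1$. Using the symmetries $H_{Q_A(s-4)}(1)=H_{Q_A(s-4)}(3)$ and $H_{Q_A(s-3)}(1)=H_{Q_A(s-3)}(2)$, I would bound the first by $H_A(3)-1$ via the decomposition $H_A(3)=\sum_{a=0}^{s-4}H_{Q_A(a)}(3)$ together with $H_{G(A)}(3)\ge 1$, and the second by $H_A(2)-H_A(3)$ via $H_A(2)=\sum_{a=0}^{s-4}H_{Q_A(a)}(2)+H_{Q_A(s-3)}(2)$ together with Lemma \ref{lNotExist}. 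These bounds telescope to $f_3\le 1+(H_A(3)-1)+(H_A(2)-H_A(3))=H_A(2)\le 4$.

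The low socle-degree cases $s\le 4$ are treated by truncated versions of the same computation. For $s=4$ the sum collapses to $f_3=H_{G(A)}(1)+H_{Q_A(1)}(1)=H_A(3)+H_{Q_A(1)}(2)$, where the first equality uses $H_{Q_A(1)}(3)=0$, and Lemma \ref{lNotExist} reads $H_{G(A)}(2)\ge H_A(3)$ giving $H_{Q_A(1)}(2)\le H_A(2)-H_A(3)$; the same telescope produces $f_3\le H_A(2)$. For $s\le 3$ the bound is immediate from Gorenstein symmetry of $G(A)$ alone.

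The main obstacle is the bookkeeping of the middle step: identifying exactly which $Q_A(a)$ can contribute in degree one (only the three indices $a\in\{0,s-4,s-3\}$, with $Q_A(s-2)$ dropping out of the sum defining $f_3$), choosing the correct symmetry direction so that the bounds coming from Lemma \ref{lNotExist} and from $H_{G(A)}(i)\ge 1$ furnish independent estimates, and verifying that the resulting inequalities combine additively without any double-counting. Once the three surviving terms are isolated and bounded as above, the final telescoping to $f_3\le H_A(2)$ is immediate and Corollary \ref{cRatF} closes the argument.
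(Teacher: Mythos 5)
Your proposal is correct and takes essentially the same approach as the paper: both arguments reduce to showing $f_3\le H_A(2)\le 4$ via the symmetric decomposition \eqref{GorDec}, with Lemma \ref{lNotExist} supplying the key inequality, and then invoke Corollary \ref{cRatF}. The paper simply names the few nonzero entries of the decomposition (writing rows $(1,1,\dots,1)$, $(0,a_1,a_2,a_1,0)$, $(0,b_1,b_1,0)$, $(0,c_1,0)$ and noting $a_1\le a_2$), whereas you derive the same $f_3\le 1+(H_A(3)-1)+(H_A(2)-H_A(3))$ telescoping bound by an explicit symmetry computation; the logic is identical.
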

\begin{proof}
Let us examine the case $\cdeg(A)=3$, the other ones being similar. Lemma \ref{lNotExist} yields
\begin{equation}\label{e3StrIneq}
    H_A(2)\ge \sum_{a=0}^{s-4}H_{Q(a)}(2)\ge H_A(3).
\end{equation}
If $\sdeg(A) \geq 5$, then Decomposition \eqref{GorDec} is 
$$ 
(1,1, \dots, 1) + (0, a_1, a_2, a_1, 0) + (0, b_1, b_1, 0) + (0, c_1,0)
$$ 
for some integers $a_1, a_2, b_1, c_1$. Inequality
\eqref{e3StrIneq} is equivalent to $a_1\leq a_2$. We know that
$H_A(2) = a_2 + b_1 + 1\leq 4$, so $f_4 = a_1 + b_1 + 1\leq  4$
and the argument follows from Corollary \ref{cRatF}. In the case
$\sdeg(A) = 4$, the decomposition \eqref{GorDec} changes, but the
argument stays the same.
\end{proof}

Now we skip the condition $\cdeg(A)=3$ but we impose a restriction on the shape of $H_A$. The following theorem generalizes a well--known result
proved when either $m=1,2$ (see \cite{Sa2} and \cite{E--V3}
respectively) or $m\le 4$ and $s=3$ (see again \cite{C--E--N--R}).

\begin{theorem}
\label{tColumn} Let $A$ be a local, Artinian, Gorenstein
$k$--algebra such that $H_A(i)=m$, $2\le i\le \cdeg(A)$. If $m\le
4$, then $P_A$ is rational.
\end{theorem}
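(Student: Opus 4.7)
The plan is to reduce the problem to Corollary~\ref{cRatF} by showing that $f_3\le m\le 4$. First I would dispose of the small-capital-degree case: if $c:=\cdeg(A)\le 3$, then since $H_A(2)=m\le 4$ the result is already Theorem~\ref{tRat3Str}. So assume $c\ge 4$ and set $s:=\sdeg(A)$. It is also convenient to separate the ``tail'' case $s\ge c+2$ from the boundary case $s=c+1$.

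For $s\ge c+2$, I would exploit the Gorenstein decomposition \eqref{GorDec} to pin down $G(A)=Q_A(0)$. The trailing ones $H_A(j)=1$ for $c<j\le s$ combined with $H_{G(A)}(j)\le H_A(j)$ and the symmetry $H_{G(A)}(j)=H_{G(A)}(s-j)$ force $H_{G(A)}(j)\le 1$ for $j\in[1,s-c-1]\cup[c+1,s-1]$. Since $s\ge c+2$ this gives $H_{G(A)}(1)=1$, so $G(A)\cong k[y]/(y^{s+1})$ and $H_{Q_A(0)}=(1,1,\dots,1)$. Subtracting this from $H_A$ and using $H_A(j)-1=0$ for $c<j<s$ forces $H_{Q_A(a)}(j)=0$ for $a\ge 1$ and $j\in[c+1,s-a-1]$; the symmetry of each $Q_A(a)$ around $(s-a)/2$ then propagates the vanishing to $j\in[1,s-a-c-1]$. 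In particular $H_{Q_A(a)}(1)=0$ whenever $a\le s-c-2$, so
\[
f_3\ =\ 1+\sum_{a=s-c-1}^{s-3}H_{Q_A(a)}(1)\ =\ 1+\sum_{j=2}^{c}H_{Q_A(s-1-j)}(j),
\]
where the last equality uses the symmetry $H_{Q_A(a)}(1)=H_{Q_A(a)}(s-a-1)$.

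The main technical step is to show $\sum_{j=2}^{c}H_{Q_A(s-1-j)}(j)\le m-1$. I would imitate the proof of Lemma~\ref{lNotExist}, applied iteratively: writing $F$ in the normal form of \cite{C--N5}, Remark~4.2, and considering for each $j\in[3,c]$ the truncation $F_{\ge j}$, one obtains a Gorenstein algebra with a columnar Hilbert function of shorter length and can apply a Gotzmann-persistence argument analogous to the one used in Lemma~\ref{lNotExist}'s proof. Each application forces one of the ``middle'' components $Q_A(a)$ (for $a\in[s-c,s-3]$) to vanish at its socle degree, leaving only $Q_A(s-c-1)$ and $Q_A(s-2)$ active at degree $1$. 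Since $H_{Q_A(s-c-1)}(1)=H_{Q_A(s-c-1)}(c)\le H_A(c)-1=m-1$, we conclude $f_3\le m\le 4$ and invoke Corollary~\ref{cRatF}. In the boundary case $s=c+1$ the quotient $G(A)$ need not be one-variable, but the same kind of Iarrobino-symmetry analysis applied to $H_A$ forces $H_{G(A)}(s-1)=H_{G(A)}(1)\le m$ and leads to the same bound $f_3\le m$.

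The main obstacle is the bound $f_3\le m$: the naive estimate $H_{Q_A(s-1-j)}(j)\le m-1$ for each $j$ yields only $f_3\le 1+(c-1)(m-1)$, which is far too weak once $c\ge 4$. The sharper bound genuinely requires the iterated Gotzmann/Macaulay-growth argument in the style of Lemma~\ref{lNotExist}, carried out for every truncation $F_{\ge j}$ with $j\in[3,c]$, and this is where the bulk of the work lies.
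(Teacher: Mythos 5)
Your overall strategy coincides with the paper's: reduce to Corollary~\ref{cRatF} by showing $f_3\le m\le4$, which amounts to proving that in the Iarrobino decomposition only $Q_A(0)$ and $Q_A(s-c-1)$ contribute to $f_3$, i.e.\ $H_{Q_A(a)}(1)=0$ for $a\in[s-c,s-3]$. Your preliminary bookkeeping (the identification $G(A)=(1,1,\dots,1)$ for $s\ge c+2$, the vanishing of $H_{Q_A(a)}(1)$ for $a\le s-c-2$, and the symmetry-rewrite $f_3=1+\sum_{j=2}^{c}H_{Q_A(s-1-j)}(j)$) is correct, and you rightly diagnose that the naive termwise bound $H_{Q_A(s-1-j)}(j)\le m-1$ is much too weak.

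But the crux — showing $\sum_{j=2}^{c}H_{Q_A(s-1-j)}(j)\le m-1$ — is left unproved. Your proposal to ``imitate the proof of Lemma~\ref{lNotExist}, applied iteratively'' over all truncations $F_{\ge j}$, $j\in[3,c]$, is only a sketch: you never show that each application actually kills a middle row of the decomposition, you don't verify that the components $Q_B(a)$ of each intermediate truncation match those of $A$ in the required range, and you explicitly concede that ``this is where the bulk of the work lies.'' That is a genuine gap, and the heart of the theorem.

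The paper closes this gap with a different, single-shot argument that you should compare with your iteration idea. It considers only one truncation, $B:=S[n]/\Ann(F_{\ge c+1})$, shows $H_B(1)=m$, and proves $H_B(i)=m$ for every $i\in[2,c]$ by contradiction: if $H_B(i_0)\le m-1\le 3$ with $i_0$ maximal, then Macaulay's growth theorem forces $i_0=2$; the symmetry of $H_{Q_B(s-c-1)}$ around $(c+1)/2$ then forces $c=3$; and after showing $H_{Q_B(s-3)}=H_{Q_B(s-2)}=0$ one lands exactly in the configuration $(1,m,p,m,1,\dots,1)$ with $p\le m-1$, which Lemma~\ref{lNotExist} rules out. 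Thus $H_B(i)=m$ throughout, hence $H_{Q_A(s-c-1)}(i)=m-1$ for $i\in[2,c]$, the other rows vanish, and $f_3=m$. This avoids any iteration and uses Lemma~\ref{lNotExist} exactly once. Your reduction of $c\le3$ to Theorem~\ref{tRat3Str} is a fine alternative starting point, but without a worked-out argument for $c\ge4$ your proof is incomplete.
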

\begin{proof}
Let $c:=\cdeg(A)$, $n:=H_A(1)$, take a polynomial $F:=y_1^s+F_{c+1}+\dots$,
$F_{c+1}\in P[f_{c+1}]_{c+1}=P[m]_{c+1}$ such that $A\cong
S[n]/\Ann(F)$ (see Remark 4.2 of \cite{C--N5}) and set $B:=S[n]/\Ann(F_{\ge c+1})$ so that $Q_A(a)= Q_B(a)$ for $a\le s-c-1$ (again by Lemma 1.10 of \cite{Ia2}). In particular $H_B(c)=m$, thus Decomposition \eqref{GorDec} implies $H_B(1)\ge m$. Since we know that $F_{\ge c+1}\in P[m]$, it follows that $H_B(1)\le m$, hence equality must hold.

As in the proof of the previous lemma one immediately checks that
either $s=c+1$, and $H_{Q_A(0)}=(1,m,\dots,m,1)$, or $s\ge c+2$, and
$$
H_{Q_A(a)}\left\lbrace\begin{array}{ll}
(1,1,\dots,1,1,\dots,1)\qquad&\text{if $a=0$,}\\
(0,0,\dots,0,0,\dots,0)\qquad&\text{if $a=1,\dots,s-c-2$,}\\
(0,m-1,\dots,m-1,0,\dots,0)\qquad&\text{if $a=s-c-1$.}
 \end{array}\right.
$$

Assume that $H_{B}(i)\le m-1\le3$ for some
$i=2,\dots,c-1$. Let $i_0$ be the maximal of such $i$'s. We know
that there are $k(i_0)>k(i_0-1)>k(i_0-2)>\dots$ such that
$$
H_{B}(i_0)={k(i_0)\choose i_0}+{k(i_0-1)\choose i_0-1}
+{k(i_0-2)\choose i_0-2}+\dots\le m-1\le3.
$$
If $i_0\ge3$, it would follow $k(i_0)\le i_0$, thus Macaulay's
bound implies
\begin{align*}
H_{B}(i_0+1)&\le H_{B}(i_0)^{\langle i_0\rangle}=\\
&={k(i_0)+1\choose i_0+1}+{k(i_0-1)+1\choose i_0}+{k(i_0-2)+1\choose i_0-1}+\dots=\\
&=H_{B}(i_0)\le m-1,
\end{align*}
a contradiction. We conclude that $i_0=2$. 

Due to the symmetry of
$H_{Q_B(s-c-1)}$ we deduce that $c=3$. If $H_{Q_B(s-3)}(2)=q$, the symmetry of $H_{Q_B(s-3)}$ implies $H_{Q_B(s-3)}(1)=q$, hence Decomposition \eqref{GorDec} implies
$$
m=H_B(1)=\sum_{a=0}^{s-2}H_{Q_B(a)}(1)= m+q+H_{Q_B(s-2)}(1).
$$
It follows that $q=H_{Q_B(s-2)}(1)=0$, whence $H_{B}=(1,m,p,m,1,\dots,1)$ where $p\le m-1$ which cannot occur by Lemma
\ref{lNotExist}.

We conclude that $H_{Q_A(s-c-1)}(i)=H_{Q_B(s-c-1)}(i)=m-1$ for each $i=2,\dots,c$, then the hypothesis on $H_A(i)$ and Decomposition \eqref{GorDec} yield
$$
H_{Q_A(a)}=\left\lbrace\begin{array}{ll}
(0,0,0,\dots,0,0,\dots,0)\qquad&\text{if $a=s-c,\dots,s-3$,}\\
(0,n-m,0,\dots,0,0,\dots,0)\qquad&\text{if $a=s-2$,}
 \end{array}\right.
$$
whence $f_3=\sum_{a=1}^{s-3}H_{Q(a)}(1)=m\le4$.
\end{proof}

As third example we skip the condition on the shape of $H_A$ but
we put a limit on $\dim_k(A)$, slightly extending the result
proved in \cite{C--N5}.

\def\MaxDim{16}%
\begin{theorem}
\label{cRatUpTo16}
Let $A$ be a local, Artinian, Gorenstein $k$--algebra with $\dim_k(A)\le
\MaxDim$ and $H_A(2)\le4$. Then $P_A$ is rational.
\end{theorem}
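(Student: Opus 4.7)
The plan is to reduce the statement to one of the three established rationality criteria: Theorem \ref{tRat3Str}, Theorem \ref{tColumn}, or Corollary \ref{cRatF}. If $c := \cdeg(A) \le 3$, Theorem \ref{tRat3Str} applies directly. Otherwise, assume $c \ge 4$; since $H_A(i) \ge 2$ for $2 \le i \le c$, $H_A(i) = 1$ for $c < i \le s := \sdeg(A)$, and $c < s$, the hypothesis $\dim_k(A) \le 16$ yields
\begin{equation*}
H_A(1) + s + c \le 17,
\end{equation*}
so in particular $c \le 7$ and $s \le 12$. This restricts $H_A$ to finitely many shapes.

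The objective is to show $f_3 \le 4$, whereupon Corollary \ref{cRatF} concludes the argument. By the Iarrobino decomposition,
\begin{equation*}
f_3 = \sum_{a=0}^{s-3} H_{Q_A(a)}(1), \qquad H_A(2) = \sum_{a=0}^{s-3} H_{Q_A(a)}(2) \le 4,
\end{equation*}
and the symmetry of $Q_A(s-3)$ around $3/2$ gives $H_{Q_A(s-3)}(1) = H_{Q_A(s-3)}(2)$. So it suffices to establish $H_{Q_A(a)}(1) \le H_{Q_A(a)}(2)$ for each $0 \le a \le s-4$. When $s \ge c+2$, the chain $H_{G(A)}(s-1) \le H_A(s-1) = 1$ combined with the symmetry of $G(A) = Q_A(0)$ around $s/2$ forces $G(A)$ to have embedding dimension one, so $H_{Q_A(0)}(1) = H_{Q_A(0)}(2) = 1$; analogous arguments, using the vanishings imposed by $H_A(i) = 1$ for $c < i \le s$, propagate through the remaining $Q_A(a)$ and verify the inequality in each degree. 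When $s = c+1$, either $H_A$ is constant on $[2, c]$ and Theorem \ref{tColumn} applies (since then the common value is bounded by $H_A(2) \le 4$), or we enumerate the short list of Hilbert functions permitted by $H_A(1) + s + c \le 17$ and $H_A(2) \le 4$, verifying $f_3 \le 4$ directly.

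The main obstacle is the case $s = c+1$, where $G(A)$ may have embedding dimension greater than one and the inequality $H_{Q_A(0)}(1) \le H_{Q_A(0)}(2)$ can fail. Here the dimension bound confines the analysis to a handful of admissible Hilbert functions, and Macaulay's growth theorem together with the constraint $H_A(2) \le 4$ rules out non-unimodal symmetric Hilbert functions for $G(A)$ in the permissible range; the residual borderline cases are covered by Theorem \ref{tColumn}.
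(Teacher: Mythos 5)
Your reduction to showing $f_3\le 4$ and then invoking Corollary \ref{cRatF} is exactly the paper's strategy, and handling $\cdeg(A)\le 3$ via Theorem \ref{tRat3Str} is a clean way to start. The gap is in the claimed term-by-term inequality $H_{Q_A(a)}(1)\le H_{Q_A(a)}(2)$ for $0\le a\le s-4$ when $s\ge c+2$. Your ``propagation'' argument works only for small $a$: by symmetry $H_{Q_A(a)}(1)=H_{Q_A(a)}(s-a-1)$, and when $s-a-1>c$ the vanishing $H_A(s-a-1)=1$ together with $H_{Q_A(0)}=(1,1,\dots,1)$ does force $H_{Q_A(a)}(1)=0$. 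But this only covers $1\le a<s-c-1$. For $a$ in the range $s-c-1\le a\le s-4$ (which is nonempty exactly when $c\ge3$, the situation at hand), the symmetric partner of degree $1$ lands in a degree $\le c$ where $H_A$ is not $1$, so the vanishings give you nothing. For instance $Q_A(s-4)$ has Hilbert function $(0,b_1,b_2,b_1,0,\dots)$ with no a priori reason for $b_1\le b_2$; indeed the paper's $\sdeg=5$ and $\sdeg=6$ analyses show that without the bound $\dim_k(A)\le16$ one can write down decompositions with $b_1>b_2$, and the dimension constraint is what kills them. Your plan effectively tries to prove $f_3\le H_A(2)$ from the symmetry and Macaulay structure alone, which is false in general.

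The paper instead organizes the argument by $\sdeg(A)$ and uses the dimension bound $\dim_k(A)\le16$ directly inside the symmetric decomposition in each case (together with Macaulay's bound on the cumulative Hilbert functions $H_{\gr(A)/C_A(a+1)}$) to force $f_3\le 4$. Your $s=c+1$ paragraph correctly recognizes that this is a hard case and that enumeration plus the dimension bound is needed, but you then claim that for $s\ge c+2$ the dimension bound can be dispensed with, which is where the argument breaks. To repair it you would have to bring $\dim_k(A)\le 16$ (or a Macaulay-growth argument on the cumulative quotients $\gr(A)/C_A(a+1)$) into the $s\ge c+2$ analysis for the troublesome indices $a\in[s-c-1,s-4]$, at which point you are essentially redoing the paper's case-by-case computation rather than replacing it.
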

\begin{proof}
Thanks to \cite{J--K--M} we can restrict our attention to algebras
$A$ with $H_A(1)\ge5$.

The rationality of the Poincar\'e series of stretched algebras is
proved in \cite{Sa2}. For almost stretched algebras see
\cite{E--V3}. For the case of algebras $A$ with $\sdeg(A)=3$ and
$H_A(2)\le4$ see \cite{C--E--N--R}. Finally the case $H_A(i)=m$,
$2\le i\le \cdeg(A)$ with $m\le 4$ is covered by Theorem
\ref{tColumn} above.

There are several cases which are not covered by the aforementioned results.
In each of these cases one can check that the condition $f_3\le 4$ of
Corollary \ref{cRatF} is fulfilled.
We know that necessarily $H_A(2)\ge3$, otherwise $A$ is almost stretched by
Macaulay's bound. The restriction $H_A(2)\le4$ implies $H_A(3)\le5$
again by Macaulay's bound.

Theorem \ref{tRat3Str} deals with the case $\sdeg(A) = 4$. Let us analyze
the case $\sdeg(A) = 5$ and $\dim_k A \leq \MaxDim$. The decomposition is
\[
(1, a_1, a_2, a_2, a_1, 1) + (0, b_1, b_2, b_1, 0) + (0, c_1, c_1, 0) + (0,
d_1, 0)
\]
for some integers $a_1, a_2, b_1, b_2, c_1, d_1$. If $a_1 = 1$
then the algebra is $3$-stretched, so we may suppose $a_1 \geq 2$.
We know that $H_A(2) = a_2 + b_2 + c_1 \leq 4$ and we would like
to prove $a_1 + b_1 + c_1 \leq 4$. Suppose $a_1 + b_1 + c_1 \geq
5$, then the inequality on the dimension of $A$ shows that $2\cdot
a_2 + b_2 \leq 4$, in particular $a_2 \leq 2$ and from Macaulay's
bound it follows that $a_1 = a_2 = 2$. It follows that $b_2 = 0$
and once again from Macaulay's bound $b_1 = 0$. This forces $a_1 +
b_1 + c_1 = 2 + c_1 = a_2 + b_2 + c_1 \leq 4$, a contradiction.

Let us now suppose that $\sdeg(A) = 6$. Look at the first row of the
symmetric decomposition \eqref{GorDec}: $(1, a_1, a_2, a_3, a_2, a_1, 1)$.
\begin{itemize}
    \item If $a_1 \geq 3$, then $a_2, a_3 \geq 3$ and the sum of the row is at least
$17$.
\item If $a_1 = 2$ then $a_2 = a_3 = 2$ and the sum of the row is $12$.
If we suppose that $f_3 \geq 5$, then the sum of the first column of the
remaining part of the decomposition will be at least three, so the sum of
whole remaining part will be at least $2\cdot 3 = 6$ and the dimension will
be at least $12 + 6 > \MaxDim$.
    \item Suppose $a_1 = 1$ and look at the second row $(0, b_1, b_2, b_2, b_1, 0)$.
    If $b_1 = 0$ then the algebra is
        $3$-stretched so the result follows from Theorem \ref{tRat3Str}.
        From $H_A(2) \leq 4$ it follows that $b_2 \leq 3$. If $b_2 = 3$, then
        $b_1 \geq 2$ so the dimension is at least $7 + 10 > \MaxDim$.
        If $b_2 \leq 2$ then $b_1 \leq b_2$ from Macaulay's bound. Hence,
        the same argument as before applies.
\end{itemize}

Let us finally suppose that $\sdeg(A) \geq 7$. Take the first row,
beginning with $(1, a_1, a_2,\dots)$. If $a_1 \geq 3$ then its sum
is at least $3\cdot \sdeg(A) - 1 > \MaxDim$. If $a_1 = 2$, the sum
of this row is $2\cdot \sdeg(A) \geq 14$. Then one can argue as in
the case $\sdeg(A) = 6,\ a_1 = 2$. A similar reasoning shows that
when $a_1 = 1$ the algebra has decomposition $(1, 1, \dots, 1) +
(0, 4, 4, 0)$ and so $H_A(2) \geq 5$.
\end{proof}


\bigskip
\noindent
Gianfranco Casnati,\\
Dipartimento di Scienze Matematiche,  Politecnico di Torino, \\
corso Duca degli Abruzzi 24, 10129 Torino, Italy \\
e-mail: {\tt gianfranco.casnati@polito.it}

\bigskip
\noindent
Joachim Jelisiejew,\\
Faculty of Mathematics, Informatics, and Mechanics, University of Warsaw,\\
Banacha 2, 02-097 Warsaw, Poland\\
\texttt{jj277546@students.mimuw.edu.pl}

\bigskip
\noindent
Roberto Notari, \\
Dipartimento di Matematica, Politecnico di Milano,\\
via Bonardi 9, 20133 Milano, Italy\\
e-mail: {\tt roberto.notari@polimi.it}

\enddocument